\documentclass[a4paper,12pt]{article}
\usepackage{amsmath,amsfonts,enumerate,amssymb,amsthm,color}
\usepackage{pgf,tikz,cite}
\usepackage{authblk}
\usepackage{fullpage,enumerate}

%\textheight 224 true mm
%\textwidth 157 true mm
%\voffset=-15mm
%\hoffset=-18mm

\newtheorem{thm}{Theorem}%[section]
\newtheorem{lem}{Lemma}

\newtheorem{prop}{Proposition}
\newtheorem{cor}{Corollary}
\newtheorem{problem}{Problem}

\newtheorem{defi}{Definition}

\newcommand{\comment}[1]{}

\newcommand{\R}{{\mathbb R}}

\begin{document}

\title{Equistarable graphs and counterexamples to three conjectures on equistable graphs\thanks{M.M.~is supported in part by ``Agencija za raziskovalno dejavnost Republike Slovenije'', research program P$1$--$0285$ and research projects J$1$--$4010$, J$1$--$4021$, J$1$--$5433$, and N$1$--$0011$: GReGAS, supported in part by the European Science Foundation.
N.T.~is partially supported by ANR project Stint under reference ANR-13-BS02-0007.
Part of this research was carried out during the visit of M.M.~to N.T.~at ENS Lyon; their support is gratefully acknowledged.}}

\author[1,2,3]{Martin Milani\v c}
\author[4]{Nicolas Trotignon}

\affil[1]{\scriptsize{}University of Primorska, UP IAM}% Andrej Maru\v si\v c Institute}
\affil[ ]{Muzejski trg 2, SI-6000 Koper, Slovenia, \texttt{martin.milanic@upr.si}}
\affil[ ]{}
\affil[2]{University of Primorska, UP FAMNIT} %Faculty of Mathematics, Natural Sciences and Information Technologies}
\affil[ ]{Glagolja\v{s}ka 8, SI-6000 Koper, Slovenia}
\affil[ ]{}
\affil[3]{IMFM, Jadranska 19, 1000 Ljubljana, Slovenia}
\affil[ ]{}
\affil[4]{CNRS, LIP, ENS de Lyon, INRIA, Universit\'e de Lyon}
\affil[ ]{\texttt{$\{$nicolas.trotignon$\}$@ens-lyon.fr}}

\maketitle

\begin{abstract}
Equistable graphs are graphs admitting positive weights on vertices such
that a subset of vertices is a maximal stable set if and only if it is of total
weight $1$. In $1994$, Mahadev et al.~introduced a subclass of equistable graphs, called strongly equistable graphs,
as graphs such that for every $c \le 1$
and every non-empty subset $T$ of vertices that is not a maximal stable
set, there exist positive vertex weights such that every maximal stable set
is of total weight $1$ and the total weight of $T$ does not equal $c$.
Mahadev et al.~conjectured that every equistable graph is strongly equistable.
General partition graphs are the intersection graphs of set systems over a finite
ground set $U$ such that every maximal stable set of the graph corresponds
to a partition of $U$. In $2009$, Orlin proved that every general partition graph is equistable, and
conjectured that the converse holds as well.

Orlin's conjecture, if true, would imply the conjecture due to Mahadev, Peled, and Sun. An intermediate
conjecture, one that would follow from Orlin's conjecture and
would imply the conjecture by Mahadev, Peled, and Sun, was posed by
Miklavi\v c and Milani\v c in $2011$, and states that every equistable graph has a
clique intersecting all maximal stable sets.
The above conjectures have been verified for several graph
classes. We introduce the notion of equistarable graphs and based on it construct
counterexamples to all three conjectures within the class of complements
of line graphs of triangle-free graphs.
\end{abstract}

\noindent\textbf{Keywords:} equistable graph, strongly equistable graph, general partition graph, line graph, graph complement, conjecture\\
\noindent\textbf{MSC (2010):} 05C22, 05C50, 05C69, 05C76

\section{Introduction}

We consider finite simple undirected graphs. A {\it stable} (or {\it independent}) set in a graph is a set of pairwise non-adjacent vertices; a stable set
is said to be {\it maximal} if it is not contained in any other stable set. In 1980, Payan introduced equistable graphs as a generalization of threshold graphs. A graph $G=(V,E)$ is said to be {\it equistable} if and only if there exists a mapping $\varphi:V\to \R_+$ such that
for all $S\subseteq V$,
$S \textrm{\ is\ a\ maximal\ stable\ set\ of\ } G$ if and only if
\hbox{$\varphi(S):= \sum_{v\in S}\varphi(v) = 1\,.$}~\cite{MR553649}.
The mapping $\varphi$ is called an {\it equistable weight function} of $G$.
Equistable graphs were studied in a series
of papers~\cite{MR2794315,MR2379078,MR3162288,MR2024271,MR2024275,MR2823204,MR1268776,MR553649,AlcGutKovMilRiz14,BorGurMil14}, however, the complexity status of recognizing equistable graphs is open, and no combinatorial characterization of equistable graphs is known.

In $1994$, Mahadev et al.~introduced a subclass of equistable graphs, the so-called strongly equistable graphs~\cite{MR1268776}.
For a graph $G$, we denote by ${\cal S}(G)$ the set of all maximal stable sets of $G$, and by ${\cal T}(G)$ the set of all other
nonempty subsets of $V(G)$.
A graph $G=(V,E)$ is said to be {\it strongly equistable} if for each $T\in {\cal T}(G)$ and each $\gamma\le 1$ there exists a
function $\varphi:V\to \R_+$ such that  $\varphi(S) = 1{\rm\ for\ all\ } S\in {\cal S}(G)$, and $\varphi(T)\neq \gamma$. Mahadev et al.~showed that
every strongly equistable graph is equistable, and conjectured that the converse assertion is valid.
Partial results on the conjecture are known. The conjecture is known to hold for a class of graphs
containing all perfect graphs~\cite{MR1268776}, for series-parallel graphs~\cite{MR2024271}, for line graphs~\cite{MR3162288} and
more generally for EPT graphs~\cite{AlcGutKovMilRiz14}, for AT-free graphs~\cite{MR2794315}, and for various product graphs~\cite{MR2794315}.

A combinatorially defined graph class closely related to equistable graphs is the class of general partition graphs. A graph $G= (V,E)$ is a
{\it general partition graph} if there exists a set $U$ and an assignment of non-empty subsets $U_x\subseteq U$ to the vertices of $G$ such
that two vertices $x$ and $y$ are adjacent if and only if $U_x\cap U_y\neq\emptyset$, and for every maximal stable set $S$ of $G$,
the set $\{U_x\,:\,x\in S\}$ is a partition of $U$. General partition graphs arise in the geometric setting of lattice polygon
 triangulations~\cite{MR1018966} and were studied in a series of
papers~\cite{MR1477536,MR1212882,MR960652,MR1038674,MR2007309,MR2080087,MR2794315}.

A {\it strong clique} in a graph $G$ is a clique (that is, a set of pairwise adjacent vertices) that meets all maximal stable sets.
McAvaney, Robertson and DeTemple proved in~\cite{MR1212874} that a graph $G$ is general partition if and only if
every edge of $G$ is contained in a strong clique. Together with some results from~\cite{MR1268776,MR2794315},
this result implies that every general partition graph is strongly equistable. As a possible attempt to settle the conjecture of Mahadev et al.,
Jim Orlin (personal communication, 2009) proposed a stronger conjecture stating that all equistable graphs are general partition
graphs (cf.~\cite{MR2794315}). The results of Peled and Rotics~\cite{MR2024275} and of Korach and Peled~\cite{MR2024271}, respectively, imply
that Orlin's conjecture holds within the classes of chordal graphs and of series-parallel graphs.
The conjecture has also been verified for simplicial graphs~\cite{MR3162288}, very
well-covered graphs~\cite{MR3162288}, line graphs~\cite{MR3162288}, EPT graphs~\cite{AlcGutKovMilRiz14}, AT-free graphs~\cite{MR2794315}, and various product graphs~\cite{MR2794315}.

As an intermediate conjecture, one that would follow from Orlin's conjecture and would imply the conjecture of Mahadev et al.,
Miklavi\v c and Milani\v c proposed in~\cite{MR2794315} {a} conjecture stating that every equistable graph has a strong clique.

To summarize, the classes of general partition graphs, strongly equistable graphs, and equistable graphs are related as follows:
$$\emph{general partition graphs} \subseteq \emph{strongly equistable graphs} \subseteq\emph{equistable graphs}\,.$$
{The conjecture of Mahadev et al.~states that the last inclusion is in fact an equality (which would also follow
from the conjecture due to Miklavi\v c and Milani\v c); the conjecture of Orlin states that both inclusions are equalities.}

In this paper, we prove that in fact all these three classes are pairwise distinct, thus disproving the conjectures of
Mahadev et al., of Orlin, and of Miklavi\v c and Milani\v c.
Our proofs rely on linear algebra, and all the separating examples are complements of line graphs of triangle-free graphs.
We present a sufficient condition for a graph $G$ such that the complement of its line graph
is equistable but not general partition, and construct several graphs satisfying this condition:
an infinite family based on circulant graphs, and a particular $14$-vertex graph, which is not strongly equistable.
We can thus refine the relations between the above mentioned graph classes as follows:
$$\emph{general partition graphs} \subsetneq \emph{strongly equistable graphs} \subsetneq\emph{equistable graphs}\,.$$
{We also
%prove that Orlin's conjecture holds for complements of line graphs of bipartite graphs of minimum degree at least $2$, and
give an alternative proof to that of Mahadev et al.~\cite{MR1268776} of the fact that every strongly equistable graph is equistable.}

The paper is structured as follows. In Section~\ref{sec:equistarable} we define and study the class of equistarable graphs, the complements of
line graphs of which will form the basis for our constructions.
In Section~\ref{sec:sufficient}, we state and prove a sufficient condition for a graph to be equistarable.
Section~\ref{sec:counterexamples} contains our constructions.
%In Section~\ref{sec:bipartite} we study Orlin's conjecture for complements of line graphs of bipartite graphs.
In Section~\ref{sec:proof}, we give an alternative, more constructive proof than that given by
Mahadev et al.~\cite{MR1268776}, of the fact that every strongly equistable graph is equistable.

\bigskip
\noindent{\em Basic definitions and notation.}
The {\it complement} of a graph $G$ is the graph $\overline{G}$ with vertex set $V(G)$ in which two distinct vertices
are adjacent if and only if they are non-adjacent in $G$. Given two graphs $G$ and $H$, we say that a graph $G$ is {\it $H$-free}
if no induced subgraph of $G$ is isomorphic to $H$.
The {\it degree} of a vertex $v$ in a graph $G$ is the number of edges incident with $v$, denote by $d_G(v)$. By
$\delta(G)$ we denote the minimum degree of a vertex in $G$.
By $K_n$ we denote the complete graph with $n$ vertices.
For a positive integer $n$, we write $[n]$ for $\{1,\ldots, n\}$.

We say that a graph $G$ is a {\em line graph} if there exists a graph $H$ such that there is a bijective mapping
$\varphi: V(G)\to E(H)$ such that two distinct vertices $u,v\in V(G)$ are adjacent in $G$ if and only if
the edges $\varphi(u)$ and $\varphi(v)$ of $H$ have a common endpoint. If this is the case, then graph $H$ is
called the {\em root graph} of $G$, while $G=L(H)$ is the {\em line graph} of $H$. Except for $G= K_3$, the root of a
connected line graph $G$ is unique (up to isomorphism), and can be computed in linear time~\cite{MR0424435}.

\section{Equistarable graphs}\label{sec:equistarable}

Given a graph $G$ and a vertex $v\in V(G)$, the {\it star rooted at $v$} is the set $E(v)$ of all edges incident with $v$.
A {\it star} of $G$ is a star rooted at some vertex of $v$, and a star is said to be {\it maximal} if
it is not properly contained in another star of $G$.

\begin{lem}\label{lem:maximal_star}
Let $G$ be a graph and let $v\in V(G)$.
Then, the following are equivalent:
\begin{enumerate}
  \item $E(v)$ is a maximal star.
  \item Either $d_G(v)\ge 2$, or $d_G(v) = d_G(w) = 1$ where $w$ is the unique neighbor of $v$,
  or $d_G(v) = 0$ and $E(G) = \emptyset$.
\end{enumerate}
\end{lem}

\begin{proof}
Suppose that $d_G(v)\ge 2$, and let $x$ and $y$ be two distinct neighbors of $v$.
Then clearly $\{vx,vy\}\subseteq E(w)$ if and only if $w = v$, which implies that $E(v)$ is a maximal star.
Suppose now that $d_G(v)= 1$. Then,
$E(v)$ is properly contained in $E(w)$ where $w$ is the unique neighbor of $v$
if and only if $d_G(w) >1$. Clearly, if $x\in V(G)\setminus \{v,w\}$, then
$E(v)$ cannot be properly contained in $E(x)$.
This shows that $E(v)$ is a maximal star if and only if $d_G(w) = 1$.
Finally, if $v$ is an isolated vertex, then $E(v) = \emptyset$ is properly contained in another star if and only if
$G$ is edgeless.\end{proof}

We now introduce a class of graphs that will form the basis for our counterexamples.

\begin{defi}
A graph $G = (V,E)$ without isolated vertices is said to be {\em equistarable} if there exists
a mapping $\varphi:E\to\mathbb{R}^+$ on the edges of $G$
such that a subset $F\subseteq E$ is a maximal star in $G$ if and only if $\varphi(F) :=\sum_{e\in F}\varphi(e)=1$.
The mapping $\varphi$ is called an {\em equistarable weight function} of $G$.
\end{defi}

Observe that for every graph $G$, a subset of edges $F\subseteq E(G)$ forms a clique in $L(G)$
if and only if $F$ is either contained in a star of $G$, or $F$ is a triangle (the edge set of a $K_3$ in $G$).
Consequently, $F$ is a maximal clique in $L(G)$ if and only if it is a maximal element in the collection
of all triangles and maximal stars in $G$.

\begin{lem}\label{lem:triangle-free}
Let $G$ be a triangle-free graph.
Then $G$ is equistarable if and only if $\overline{L(G)}$ is equistable.
\end{lem}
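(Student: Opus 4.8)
The plan is to exploit the fact that the vertex set of $\overline{L(G)}$ is, by definition, the edge set $E(G)$, so that a weight function $\varphi\colon E(G)\to\mathbb{R}^+$ is literally the same object as a candidate equistable weight function on $V(\overline{L(G)})$. Under this identification the lemma asks for nothing more than the coincidence of two families of ``distinguished'' subsets of $E(G)$: the maximal stars of $G$, which are the sets forced to have weight $1$ in the definition of equistarability, and the maximal stable sets of $\overline{L(G)}$, which play the same role in the definition of equistability. Thus the whole proof reduces to checking that, for triangle-free $G$, these two families are equal.

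First I would record the standard fact that complementation exchanges stable sets and cliques while preserving inclusion, so that the maximal stable sets of $\overline{L(G)}$ are exactly the maximal cliques of $L(G)$. The key step is then to identify the maximal cliques of $L(G)$ with the maximal stars of $G$, and this is where triangle-freeness enters. By the observation recorded above, a set $F\subseteq E(G)$ is a maximal clique of $L(G)$ if and only if it is a maximal element of the collection consisting of all triangles and all maximal stars of $G$. Since $G$ is triangle-free, this collection contains no triangles; and since distinct maximal stars are incomparable under inclusion (no maximal star is properly contained in a star, hence none is properly contained in another maximal star), every maximal star is automatically a maximal element of the collection. Hence the maximal cliques of $L(G)$ are precisely the maximal stars of $G$.

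Combining the two steps, $S\subseteq E(G)$ is a maximal stable set of $\overline{L(G)}$ if and only if $S$ is a maximal star of $G$. Consequently the defining condition of an equistarable weight function of $G$, namely that $\varphi(F)=1$ exactly when $F$ is a maximal star, is word for word the defining condition of an equistable weight function of $\overline{L(G)}$; and since both notions require strictly positive weights, the two conditions are met by exactly the same functions, which yields both implications at once. The only point requiring care is the convention that equistarability is defined only for graphs without isolated vertices: since isolated vertices of $G$ contribute no vertices to $L(G)$, I would simply note that one may assume $G$ has none, which is harmless for the equivalence. I do not expect any genuine obstacle here, as the content of the lemma is entirely the triangle-free identification of the maximal cliques of $L(G)$ with the maximal stars of $G$, already supplied by the observation preceding the statement; the remaining work is the bookkeeping of positivity and of the degenerate cases.
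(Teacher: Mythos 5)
Your proof is correct and follows essentially the same route as the paper's: both reduce the statement to the identification, under triangle-freeness, of the maximal cliques of $L(G)$ with the maximal stars of $G$, and then observe that the two weight-function conditions coincide verbatim. Your extra remarks on the incomparability of maximal stars and on isolated vertices are harmless elaborations of points the paper leaves implicit.
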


\begin{proof}
Let $G = (V,E)$ be a triangle-free graph.
Since $G$ is triangle-free, a subset $C\subseteq E$ is a maximal clique in $L(G)$
if and only if $C$ is a maximal star in $G$.
Consequently, $S\subseteq E$ is a maximal stable set in
$\overline{L(G)}$, if and only if $S$ is a maximal star in $G$.
Thus, $G$ is equistarable if and only if $\overline{L(G)}$ is equistable.
\end{proof}

Given a graph $G$, a (maximal) clique in the complement of its line graph $\overline{L(G)}$
corresponds to a (maximal) stable set in $L(G)$, and hence to a (maximal) matching in $G$.
(A {\it matching} is a subset of pairwise disjoint edges, and it is {\it maximal} if it is not contained
in a larger matching.)
A matching in a graph $G$ is said to be {\it perfect} if it consists of $|V(G)|/2$ edges.

\begin{lem}\label{lem:strong-clique}
Let $G$ be a triangle-free graph with $\delta(G)\ge 2$, and let $M\subseteq E(G)$.
Then $M$ is a strong clique in $\overline{L(G)}$
if and only if $M$ is a perfect matching in $G$.
\end{lem}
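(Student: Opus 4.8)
The plan is to translate both the hypothesis and the conclusion into the language of stars and matchings in $G$, using the correspondences already established, and then to observe that the two resulting conditions coincide.

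First I would pin down the maximal stable sets of $\overline{L(G)}$. By the argument in the proof of Lemma~\ref{lem:triangle-free}, triangle-freeness of $G$ guarantees that the maximal stable sets of $\overline{L(G)}$ are exactly the maximal stars of $G$. Since $\delta(G)\ge 2$, every vertex $v$ satisfies $d_G(v)\ge 2$, so by Lemma~\ref{lem:maximal_star} each $E(v)$ is a maximal star, and these are all of them. Hence the maximal stable sets of $\overline{L(G)}$ are precisely the vertex stars $\{E(v) : v\in V(G)\}$.

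Next I would recall, as noted just before the statement, that the cliques of $\overline{L(G)}$ are exactly the matchings of $G$. Thus a strong clique $M$ in $\overline{L(G)}$ is by definition a matching of $G$ that meets every maximal stable set of $\overline{L(G)}$, that is, a matching $M$ with $M\cap E(v)\neq\emptyset$ for every $v\in V(G)$. The condition $M\cap E(v)\neq\emptyset$ simply says that $v$ is covered by $M$, so $M$ meets all maximal stars if and only if $M$ saturates every vertex of $G$, i.e.\ if and only if $M$ is a perfect matching. Both implications then follow at once: a perfect matching is a matching covering all vertices, hence a clique meeting every $E(v)$, hence a strong clique; and conversely a strong clique is a matching meeting every $E(v)$, hence covering every vertex, hence perfect.

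This argument is essentially bookkeeping, so I do not expect a genuine obstacle. The only point requiring care is the use of the two hypotheses in the right places: triangle-freeness is what makes the maximal stable sets of $\overline{L(G)}$ coincide with the maximal stars of $G$ (rather than also admitting triangles of $G$ as maximal cliques of $L(G)$), while $\delta(G)\ge 2$ is what forces every maximal star to have the clean form $E(v)$, so that \emph{meeting all maximal stars} reads unambiguously as \emph{saturating all vertices}. Dropping either hypothesis would break this identification, so I would flag explicitly where each is invoked.
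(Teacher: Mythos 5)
Your proof is correct and follows essentially the same route as the paper: identify cliques of $\overline{L(G)}$ with matchings of $G$, use triangle-freeness to identify maximal stable sets of $\overline{L(G)}$ with maximal stars of $G$, use $\delta(G)\ge 2$ (via Lemma~\ref{lem:maximal_star}) to conclude that every star $E(v)$ is maximal, and observe that a matching meeting every star is exactly a perfect matching. Nothing is missing.
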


\begin{proof}
Let $G = (V,E)$ be a triangle-free graph with $\delta(G)\ge 2$, and
let $M\subseteq E$.

As observed above, $M$ is a clique in
$\overline{L(G)}$ if only if it is a matching in $G$.
Moreover, the condition that $M$ is a strong clique in $\overline{L(G)}$
is equivalent to the condition that $M$ is a clique in $\overline{L(G)}$ intersecting
 all maximal stable sets in $\overline{L(G)}$, which is in turn equivalent to the
condition that $M$ is a stable set in
$L(G)$ intersecting all maximal cliques in $L(G)$.
Since $G$ is triangle-free,
$M$ is a stable set in
$L(G)$ intersecting all maximal cliques in $L(G)$,
if and only if
$M$ is a matching in $G$ intersecting all maximal stars in $G$.
Since $\delta(G)\ge 2$, Lemma~\ref{lem:maximal_star} implies that every star in $G$ is maximal.
Therefore,
$M$ is a matching in $G$ intersecting all maximal stars in $G$,
if and only if
$M$ is a matching in $G$ intersecting all stars in $G$, which is equivalent to $M$ being a perfect
matching.
\end{proof}

As we show next, the condition that $\overline{L(G)}$ is a general partition graph can be expressed in terms of $2$-extendability
of $G$. Recall that a graph $G$ is said to be {\it $k$-extendable} if it has a {\it $k$-matching} (that is, a matching of size $k$),
and every $k$-matching is contained in a perfect matching (see, e.g.,~\cite{MR583220,DBLP:journals/dm/Plummer94a}).

\begin{lem}\label{lem:extendable}
Let $G$ be a triangle-free graph with $\delta(G)\ge 2$ and containing a $2$-matching.
Then, $\overline{L(G)}$ is a general partition graph if and only if $G$ is $2$-extendable.
\end{lem}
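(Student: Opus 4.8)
The plan is to use the characterization of general partition graphs via strong cliques, namely the theorem of McAvaney, Robertson and DeTemple that $\overline{L(G)}$ is a general partition graph if and only if every edge of $\overline{L(G)}$ is contained in a strong clique. Combined with Lemma~\ref{lem:strong-clique}, which identifies the strong cliques of $\overline{L(G)}$ with the perfect matchings of $G$, this reduces the problem to a statement purely about matchings in $G$. The first step is therefore to translate ``every edge of $\overline{L(G)}$ lies in a strong clique'' into the language of $G$. An edge of $\overline{L(G)}$ corresponds to two vertices of $L(G)$ that are non-adjacent, i.e.\ to a pair of disjoint edges of $G$, which is exactly a $2$-matching. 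So the condition becomes: every $2$-matching of $G$ is contained in some perfect matching of $G$. Since by hypothesis $G$ contains a $2$-matching, this is precisely the definition of $2$-extendability.

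Making this precise is mostly bookkeeping. First I would spell out the correspondence between edges of the complement line graph and $2$-matchings of $G$: a (unoriented) edge of $\overline{L(G)}$ joins two distinct vertices $e,f$ of $L(G)$ that are non-adjacent in $L(G)$, meaning $e$ and $f$ share no endpoint, i.e.\ $\{e,f\}$ is a matching of size two. Conversely every $2$-matching gives such an edge. Next, I would invoke the McAvaney--Robertson--DeTemple theorem exactly as stated in the introduction: $\overline{L(G)}$ is general partition if and only if every edge of $\overline{L(G)}$ lies in a strong clique. Then Lemma~\ref{lem:strong-clique}, applicable because $G$ is triangle-free with $\delta(G)\ge 2$, says the strong cliques of $\overline{L(G)}$ are exactly the perfect matchings of $G$. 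Splicing these together yields: $\overline{L(G)}$ is general partition if and only if every $2$-matching of $G$ is contained in a perfect matching of $G$.

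Finally I would close the argument by matching this last condition to the definition of $2$-extendability. A graph is $2$-extendable when it has a $2$-matching and every $2$-matching extends to a perfect matching; the first clause is guaranteed by the hypothesis that $G$ contains a $2$-matching, and the second clause is exactly what the reduction produced. Hence $\overline{L(G)}$ is general partition if and only if $G$ is $2$-extendable, as claimed.

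I do not expect a serious obstacle here, since all the heavy lifting has already been done in the preceding lemmas and in the cited characterization of general partition graphs. The one point that requires a little care is ensuring the hypotheses of the invoked results are all met: triangle-freeness and $\delta(G)\ge 2$ are needed so that Lemma~\ref{lem:strong-clique} applies and so that every star is maximal, while the existence of a $2$-matching is needed so that the edge set of $\overline{L(G)}$ is nonempty and the ``has a $2$-matching'' clause of $2$-extendability is satisfied; without the latter the biconditional could fail in a degenerate direction. I would therefore state each translation step as an explicit ``if and only if'' and verify that the chain of equivalences is not broken at the endpoints.
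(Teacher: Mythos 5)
Your proposal is correct and follows exactly the paper's own argument: identify edges of $\overline{L(G)}$ with $2$-matchings of $G$, invoke the McAvaney--Robertson--DeTemple characterization of general partition graphs via strong cliques, apply Lemma~\ref{lem:strong-clique} to identify strong cliques with perfect matchings, and use the hypothesis that $G$ has a $2$-matching to match the definition of $2$-extendability. No further comment is needed.
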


\begin{proof}
Notice that two distinct vertices $e,f$ of
$\overline{L(G)}$, that is, $e,f\in V(\overline{L(G)}) = E(G)$,
are adjacent, if and only if $e$ and $f$ are non-adjacent in $L(G)$, if and only if
$e$ and $f$ form a matching of size $2$ in $G$.
Thus, edges of $\overline{L(G)}$ are in a natural bijective correspondence with $2$-matchings in $G$.

It is known that a graph is a general partition graph if and only if
every edge in it is contained in a strong clique~\cite{MR1212874}.
By Lemma~\ref{lem:strong-clique}, a subset $M\subseteq E(G)$ is a strong clique in $\overline{L(G)}$
if and only if $M$ is a perfect matching in $G$.

Therefore, the condition that $\overline{L(G)}$ is a general partition graph
is equivalent to the condition that every edge of $\overline{L(G)}$, which is
equivalent to the condition that every $2$-matching of $G$ is contained in a perfect matching.
Since $G$ is assumed to contain a $2$-matching, this latter condition is equivalent to the
condition that $G$ is $2$-extendable.
\end{proof}

Lemmas~\ref{lem:strong-clique} and \ref{lem:extendable} imply that
to disprove Orlin's conjecture,
%when restricted to the complements of the line graphs of
%triangle-free graphs $G$ with
%$\delta(G)\ge 2$ and containing a $2$-matching, is equivalent to verifying whether
%every such equistarable graph is $2$-extendable.
%Thus, to disprove Orlin's conjecture, it
it suffices to construct a graph $G$ having the following properties:
\begin{itemize}
  \item $G$ is triangle-free, of minimum degree at least $2$, and containing a $2$-matching.
  \item $G$ is equistarable,
  \item $G$ is not $2$-extendable.
\end{itemize}
{In the following section, we will describe a general sufficient condition for a graph on at least $5$ vertices
 to have all the stated properties,
except perhaps the triangle-freeness. We will also provide an infinite family of triangle-free graphs satisfying the condition.}

\section{A sufficient condition for equistarability}\label{sec:sufficient}

Given a cycle $C$ in a graph $G$, a {\it chord} of $C$ is a pair of adjacent vertices that appear non-consecutively on $C$.
Given a graph $G$, an odd cycle $C$ in $G$,
and two disjoint edges $e,e'\in E(C)$, the graph $C-\{e,e'\}$ consists of two paths,
say $P_0$ and $P_1$, exactly
one of which, say $P_0$, is of even length.
An {\it $(e,e')$-non-crossing even chord of $C$} is a chord of $C$
such that its two endpoints belong to $P_0$ and are at even distance on $P_0$.
An {\it $(e,e')$-crossing odd chord of $C$} is a chord of $C$
such that exactly one of its endpoints, say $v$, belongs to $P_0$, and
$d_{P_0}(v,e)\equiv d_{P_0}(v,e')\equiv 1\pmod 2$.

\begin{defi}\label{def:bad}
We say that a graph $G$ of odd order $n$
is {\em bad} if
%it satisfies the following condition:
%\begin{enumerate}[(i)]
%%  \item $G$ is triangle-free,
%  \item
$G$ has a Hamiltonian cycle $C_n$ such that
for every two disjoint edges $e,e'\in E(C_n)$,
$G$ contains either an $(e,e')$-non-crossing even chord of $C_n$, or
an $(e,e')$-crossing odd chord of $C_n$.
%\end{enumerate}
\end{defi}

{For instance, every complete graph of odd order $n\ge 3$ is bad (with respect to any of its Hamiltonian cycles). }
For our constructions, we will be interested in triangle-free bad graphs.
For every odd $n\ge 9$, there exists a triangle-free bad graph of order $n$.
We leave it as an exercise for the reader to verify that the $4$-regular circulant graphs obtained by taking an $n$-cycle $C_n$ with odd $n\ge 11$, and
adding an edge between two vertices if and only if they are at distance $3$ on the cycle, are bad
(in fact, for every two disjoint edges $e,e'\in E(C_n)$,
$G$ contains a $(e,e')$-crossing odd chord of $C_n$; see Fig.~\ref{fig:circulants} for examples of orders $11$, $13$, $15$).
A triangle-free bad graph of order $9$ will be analyzed in Section~\ref{sec:example}.

\begin{figure}[!ht]
  \begin{center}
\includegraphics[height=40mm]{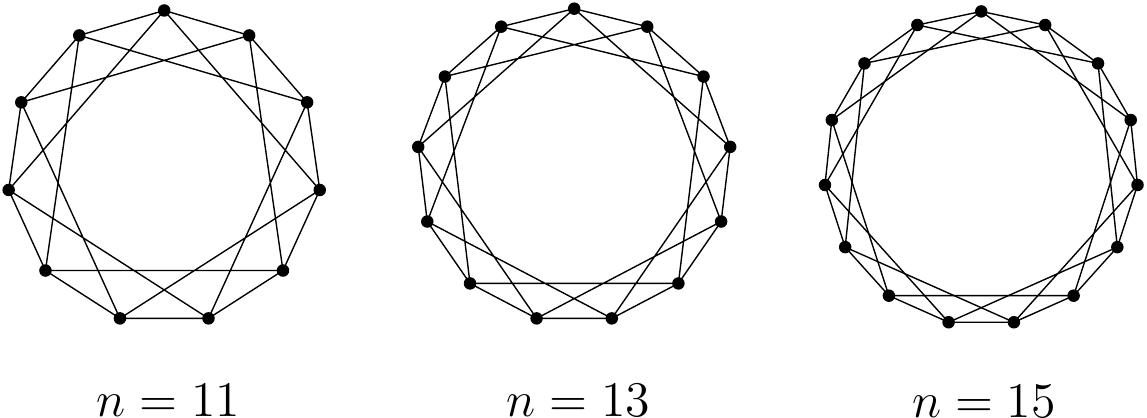}
  \end{center}
\caption{Examples of bad graphs.}
\label{fig:circulants}
\end{figure}

\begin{thm}\label{thm:bad-equistarable}
Every bad graph is equistarable.
\end{thm}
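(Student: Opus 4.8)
The plan is to reduce the existence of an equistarable weight function to a statement in linear algebra, and then to extract the combinatorial heart from the definition of a bad graph. Let $C_n = v_1 v_2 \cdots v_n$ be the Hamiltonian cycle witnessing that $G$ is bad. Since $C_n$ spans $G$, every vertex has degree at least $2$, so by Lemma~\ref{lem:maximal_star} every star of $G$ is maximal; hence it suffices to find a positive weight function $\varphi\colon E\to\R^+$ with $\varphi(E(v))=1$ for every $v\in V$ and $\varphi(F)\neq 1$ for every subset $F$ that is not a star. Writing $A$ for the vertex–edge incidence matrix of $G$, the first family of conditions is the linear system $A\varphi=\mathbf 1$. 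Because $G$ is connected and contains the odd cycle $C_n$, it is non-bipartite, so $A$ has full row rank $n$; in particular the system is consistent and the rows $\mathbf 1_{E(v)}$ are linearly independent.

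First I would isolate the subsets on which $\varphi(F)$ is the same for every solution of $A\varphi=\mathbf 1$. The value $\varphi(F)$ is constant on the affine solution space exactly when $\mathbf 1_F$ lies in the row space of $A$, that is, when there is an assignment $c\colon V\to\R$ with $c_u+c_v\in\{0,1\}$ for every edge $uv$ and $\mathbf 1_F(uv)=c_u+c_v$; call such an $F$ \emph{forced}. By full row rank this $c$ is unique, and the forced value equals $\sum_v c_v$. For every non-forced $F$ the function $\varphi\mapsto\varphi(F)$ is a genuinely non-constant affine function on the solution space, a fact I will use at the very end.

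The crux is to show that, for a bad graph, the only forced subsets of value $1$ are the stars. So suppose $c$ is as above with $\sum_v c_v=1$. Summing $c_u+c_v$ over the edges of $C_n$ telescopes to $2\sum_v c_v=2$, so exactly two cycle edges $e,e'$ receive value $1$ (hence lie in $F$) and all other cycle edges receive value $0$. If $e,e'$ share a vertex $v$, propagating the alternation $c_u+c_v=0$ along the remaining odd path pins $c$ down to $c_v=1$ and $c=0$ elsewhere, so $F=E(v)$ is a star. Otherwise $e,e'$ are disjoint; removing them splits $C_n$ into the even path $P_0$ and the odd path $P_1$, and combining $c_u+c_v=0$ along each path with $c_u+c_v=1$ across $e,e'$ forces $c$ to be alternately $\pm 1$ on $P_0$ (value $+1$ at even distance from the two ends) and $c\equiv 0$ on $P_1$. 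Here the badness of $G$ intervenes: an $(e,e')$-non-crossing even chord has both ends in $P_0$ at even mutual distance, so the two ends share the same $c$-value and $c_u+c_v=\pm 2$; an $(e,e')$-crossing odd chord has one end $v$ in $P_0$ at odd distance from each of $e,e'$ (hence $c_v=-1$) and its other end in $P_1$ (value $0$), so $c_u+c_v=-1$. Either way some edge gets a $c$-value outside $\{0,1\}$, contradicting that $F$ is forced. Thus no disjoint pair survives, and every forced subset of value $1$ is a star. I expect this parity bookkeeping—matching the two chord types of Definition~\ref{def:bad} to the two ways of leaving $\{0,1\}$—to be the main obstacle.

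It remains to produce an actual positive solution that avoids value $1$ on the non-forced subsets. I would parametrize the solution space by the weights $(t_f)$ on the chords: given these, the system $A\varphi=\mathbf 1$ determines the weights on the edges of $C_n$ uniquely as the solution of the odd-cycle system $\varphi(e_{i-1})+\varphi(e_i)=1-\sum_{f\ni v_i}t_f$, and this affine bijection has image the whole solution space. At $t=0$ all cycle weights equal $1/2$, so for all sufficiently small positive $t$ the cycle weights stay positive while the chord weights are positive; this exhibits a non-empty open set $\Omega$ of admissible chord-weight vectors. For each of the finitely many non-forced subsets $F$, the equation $\varphi(F)=1$ cuts out a proper affine hyperplane in $t$-space, so $\Omega$ minus these finitely many hyperplanes is non-empty. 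Any $t$ in this set yields a positive $\varphi$ with $A\varphi=\mathbf 1$, with $\varphi(F)\neq 1$ on non-forced non-stars, and (by the previous paragraph) with $\varphi(F)\neq 1$ on forced non-stars. Such a $\varphi$ is an equistarable weight function, completing the proof.
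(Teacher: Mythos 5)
Your proof is correct, and it reaches the paper's conclusion by a genuinely different route. The paper constructs one explicit weighting: it puts small weights on the chords that are linearly independent over $\mathbb{Q}$, solves the odd-cycle system explicitly (so that every cycle edge gets weight in $(1/3,2/3)$), uses these numerical bounds to force any weight-one set to contain exactly two cycle edges, and then uses the rational independence of the chord weights to extract the combinatorial conditions that badness contradicts. You instead split the problem by duality: a subset $F$ has the same total weight under \emph{every} solution of $A\varphi=\mathbf 1$ exactly when $\mathbf 1_F$ lies in the row space of $A$, i.e.\ when there is a vertex potential $c$ with $c_u+c_v\in\{0,1\}$ on every edge; the telescoping identity $\sum_{uv\in E(C_n)}(c_u+c_v)=2\sum_v c_v=2$ then pins down exactly two cycle edges of value $1$ (replacing the paper's numerical bound), the adjacent case yields a star, and the disjoint case is killed by matching the two chord types of Definition~\ref{def:bad} to the two ways $c_u+c_v$ can leave $\{0,1\}$ (namely $\pm 2$ and $-1$) --- which is the same parity bookkeeping as the paper's conditions (i) and (ii), just phrased via potentials. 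All non-forced sets are then handled wholesale by choosing $t$ in a non-empty open set of positive solutions minus finitely many hyperplanes, which replaces the algebraic-independence device. Your version is less explicit but conceptually cleaner, and it makes transparent exactly which subsets are the dangerous ones (the forced ones); the paper's version hands you a concrete weight function. Both arguments share the same combinatorial core, and each step of yours checks out, including the orientation of the signs on $P_0$ (value $+1$ at even distance from the endpoints of $e,e'$, hence $-1$ at the $P_0$-end of a crossing odd chord and $0$ at its $P_1$-end).
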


\begin{proof}
Let $G$ be a bad graph. By construction, the edge set of $G$ is partitioned into two disjoint sets, $E(G) = E(C_n)\cup F$,
where $C_n$ is a Hamiltonian cycle in $G$
satisfying the condition from Definition~\ref{def:bad}.

We will now construct an equistarable weight function $\varphi$ of $G$ in two steps, as follows.
Let $F = \{f_1,\ldots, f_r\}$,
let $\varepsilon \in (0,1/(3r))$ (the precise value of $\varepsilon$ will be determined later) and
let $\alpha_1,\ldots, \alpha_r\in (0,\varepsilon)$ be
positive real numbers algebraically independent over $\mathbb{Q}$, that is,
\begin{equation}\label{eq:independent}
\textrm{if~~$\sum_{i = 1}^rq_i\alpha_i = 0$ where $q_i\in  \mathbb{Q}$ for all $i\in [r]$,
then $q_1= \ldots = q_r = 0$}\,.
\end{equation}
For all $i\in [r]$, let $\varphi(f_i) = \alpha_i$.
It remains to assign the values to the $n$ edges in $E(C_n)$.
The function $\varphi$ should satisfy the constraints
of the form
$\sum_{e\in E(v)}\varphi(e) = 1$, for every vertex $v\in V(G)$.
Substituting into this system of equations the already fixed values
$\varphi(f_i) = \alpha_i$ for $f_i\in F$ yields
the following linear system with $n$ variables and $n$ equations.
Denoting by $(v_1,\ldots, v_n)$ the cyclic order of vertices on $C_n$,
the edges of $C_n$ by $e_j = v_{j-1}v_{j}$ (indices modulo $n$),
the equation corresponding to $v_j$ is
\begin{equation}\label{equation-v}
x_{j}+x_{j+1} = b_j\,,
\end{equation}
where
\begin{equation}\label{rhs}
  b_{j} = 1-\sum_{i\,:\,v_{j}\in f_i}\alpha_i\,.
\end{equation}
It can be seen that the system's coefficient matrix has determinant $2$, and hence
the system has a unique solution $\beta = (\beta_j\,,\,1\le j\le n)$.
In fact, the solution can be explicitly computed:
the $j$-th component of $\beta$ is given by
\begin{equation}\label{beta}
  \beta_{j} = \frac{1}{2}\sum_{k = 0}^{n-1}(-1)^kb_{j+k}
\end{equation}
(indices modulo $n$).
We complete the definition of the mapping $\varphi:E(G)\to\mathbb{R}_+$,
by setting $\varphi(e_j) = \beta_j$ for all $j\in [n]$.

It remains to prove that $\varphi$ is an equistarable weight function of $G$.
First, we argue that with an appropriate choice of $\varepsilon$, we can achieve $\varphi(e)\ge 0$ for all $e\in E(G)$.
By construction, we have $\varphi(f_i)>0$ for all $i\in [r]$.
Notice that in the limit when $\varepsilon\to 0$, we have $\alpha_i \to 0$ for all $i$.
Consequently, by \eqref{rhs} and \eqref{beta}, also $\beta_j\to 1/2$ for all $j$,
as $\varepsilon\to 0$.
Therefore,
%Since the values of the coordinates of $\beta$ are continuous functions in the $\alpha_i$'s,
for a small enough $\varepsilon \in (0,1/(3r))$,
we will have $\beta_j\in (1/3,2/3)$
for all $j\in [n]$.
In particular, we have $\varphi(e_j)>0$ for all $j\in [n]$.

By construction, for every maximal star $E'\subseteq E(G)$, we have $\varphi(E') = 1$.
We still need to show that if $E'\subseteq E(G)$ such that $\varphi(E') = 1$, then $E'$ is a star.
Let $\varphi(E') = 1$, and let $p = |E'\cap E(C_n)|$.
Recall that $1/3<\varphi(e)<2/3$ for all $e\in E'\cap E(C_n)$.
Thus, if $p\le 1$ then
$$\varphi(E')= \varphi(E'\cap E(C_n)) + \varphi(E'\cap F)< 2/3 + \sum_{i = 1}^r\alpha_i<2/3+r\varepsilon <1\,,$$ a contradiction.
Also, if $p\ge 3$ then
$$\varphi(E')\ge \varphi(E'\cap E(C_n)) > p/3\ge 1\,,$$ a contradiction.
So we have $p = 2$. Let $E'\cap E(C_n) = \{e,e'\}$.
If $e$ and $e'$ share a common endpoint, say $v_j$, then we may assume
$e = e_{j}$ and $e' = e_{j+1}$, and
$$\varphi(e_j)+\varphi(e_{j+1})+\varphi(E'\cap F) = \varphi(E') = 1 = \varphi(E(v_j)) = \varphi(e_j)+\varphi(e_{j+1})+ \varphi(E(v_j)\cap F)\,,$$
implying $\varphi(E'\cap F)=\varphi(E(v_j)\cap F)$ and hence, due to the algebraic independence of the
$\alpha_i$'s, we have $E'\cap F=E(v_j)\cap F$, and consequently $E' = (E(v_j)\cap F)\cup\{e_j,e_{j+1}\} = E(v_j)$ is a (maximal) star.
Suppose now that $e$ and $e'$ are disjoint.
Without loss of generality, we may assume that
$e = e_j$ and $e' = e_k$ such that $j<k$ and $k-j\ge 3$ is odd
(the other case is similar).
We have
%\begin{eqnarray*}
%% \nonumber to remove numbering (before each equation)
%\beta_{j}+\beta_{k}&=&
%\frac{1}{2}\left(\sum_{\ell = 0}^{n-1}(-1)^\ell b_{j+\ell}
%+\sum_{\ell = 0}^{n-1}(-1)^\ell b_{k+\ell}\right)\\
%&=&
%\frac{1}{2}\left((-1)^j\sum_{\ell = j}^{n+j-1}(-1)^\ell b_{\ell}
%+(-1)^k\sum_{\ell = k}^{n+k-1}(-1)^\ell b_{\ell}\right)
%\end{eqnarray*}
$$
\beta_{j}+\beta_{k}=
\frac{1}{2}\left(\sum_{\ell = 0}^{n-1}(-1)^\ell b_{j+\ell}
+\sum_{\ell = 0}^{n-1}(-1)^\ell b_{k+\ell}\right)=
\frac{1}{2}\left((-1)^j\sum_{\ell = j}^{n+j-1}(-1)^\ell b_{\ell}
+(-1)^k\sum_{\ell = k}^{n+k-1}(-1)^\ell b_{\ell}\right)$$
and consequently
$$2(-1)^j(\beta_{j}+\beta_{k})=
\sum_{\ell = j}^{n+j-1}(-1)^\ell b_{\ell}-\sum_{\ell = k}^{n+k-1}(-1)^\ell b_{\ell}
$$$$=
\sum_{\ell = j}^{k-1}(-1)^\ell b_{\ell}-\sum_{\ell = n+j}^{n+k-1}(-1)^\ell b_{\ell}=
\sum_{\ell = j}^{k-1}(-1)^\ell b_{\ell}+\sum_{\ell = j}^{k-1}(-1)^\ell b_{\ell}=
2\sum_{\ell = j}^{k-1}(-1)^\ell b_{\ell}\,.$$
Therefore
\begin{equation}\label{beta2}
\beta_{j}+\beta_{k}=
(-1)^j\sum_{\ell = j}^{k-1}(-1)^\ell b_{\ell}=\sum_{\ell = 0}^{k-j-1}(-1)^\ell b_{j+\ell}\,.
\end{equation}
Substituting equations~\eqref{rhs} and~\eqref{beta2} into
$$\beta_{j}+\beta_{k} +\varphi(E'\cap F) =  1\,,$$
observing that $\sum_{\ell = 0}^{k-j-1}(-1)^\ell = 1$ and simplifying,
we obtain
$$\sum_{\ell = 0}^{k-j-1}\sum_{i\,:\,v_{j+\ell}\in f_i}(-1)^\ell \alpha_i= \sum_{f_i\in E'\cap F}\alpha_i\,.$$
Equivalently:
$$\sum_{\ell = 1}^r\left(\sum_{\ell = 0}^{k-j-1}(-1)^\ell\mathbf{1}_{\{v_{j+\ell}\in f_i\}}\right)\alpha_i=
\sum_{\ell = 1}^r\mathbf{1}_{\{f_i\in E'\cap F\}}\alpha_i\,,$$
where $\mathbf{1}_{\{X\}}$ is $1$ if $X$ is true and $0$ otherwise.
By the algebraic independence of the $\alpha_i$'s,
we have
$$\sum_{\ell = 0}^{k-j-1}(-1)^\ell\mathbf{1}_{\{v_{j+\ell}\in f_i\}}= \mathbf{1}_{\{f_i\in E'\cap F\}}$$
for all $i\in [r]$, which implies:
\begin{enumerate}[(i)]
  \item for every $f_i\in E'\cap F$, exactly one endpoint of
  $f_i$ is in the set $\{v_j,\ldots, v_{k-1}\}$, moreover
  this endpoint is of the form $v_{j+\ell}$ such that $\ell$ is even;
  \item for every $f_i\in F\setminus E'$, either both endpoints of $f_i$ are outside the set $\{v_j,\ldots, v_{k-1}\}$,
  or they are both in, say $f_i = v_{j+\ell_1}v_{j+\ell_2}$, and $\ell_1\not\equiv \ell_2\pmod 2$.
\end{enumerate}
Since the edges $e_j$ and $e_k$ are disjoint, we can apply the fact that
$C_n$ satisfies the condition from Definition~\ref{def:bad}.
We consider two cases.

{\it Case 1. There exists an $(e_j,e_k)$-non-crossing even chord of $C_n$.}
Then such a chord is of the form $f_i=v_{j+\ell_1}v_{j+\ell_2}$ where $i\in [r]$, $\ell_1,\ell_2\in \{0,\ldots, k-j-1\}$
and $\ell_1\equiv \ell_2\pmod 2$.
By $(i)$, we have $f_i\not\in E'$. But now $f_i\in F\setminus E'$, with both endpoints in the set $\{v_j,\ldots, v_{k-1}\}$, contrary to
$(ii)$.

{\it Case 2. There exists an $(e_j,e_k)$-crossing odd chord of $C_n$.}
Let $P$ denote the path $(v_j, v_{j+1}, \ldots, v_{k-1})$.
In this case, there exists a vertex $v\in V(P)$ such that
$d_{P}(v,e_j)\equiv d_{P}(v,e_k)\equiv 1 \pmod 2$, and there exists an edge $f_i\in F$ connecting $v$
to a vertex in the component of $C_n-\{e_j,e_k\}$ not containing $v$. It can be seen that this implies that
$v = v_{j+\ell}$ for some $\ell\in \{0,\ldots, k-j-1\}$ with $\ell\equiv 1\pmod 2$, and that $f_i$ has exactly one endpoint in the set
$\{v_j,\ldots, v_{k-1}\}$. By $(ii)$, we have $f_i\not\in F\setminus E'$, and therefore $f_i\in F\cap E'$. But now, we have the contradiction with the assumption that $\ell$ is odd.

This completes the proof that $\varphi$ is an equistarable weight function of $G$.
\end{proof}

\section{Counterexamples}\label{sec:counterexamples}

%\subsection{Not all equistable graphs are general partition graphs}

The following is an easy consequence of Theorem~\ref{thm:bad-equistarable}.

\begin{prop}\label{cor:cntrex}
Let $G$ be a triangle-free bad graph. Then, $\overline{L(G)}$ is an equistable graph without a strong clique.
In particular, $\overline{L(G)}$ is not a general partition graph.
\end{prop}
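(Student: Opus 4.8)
The plan is to assemble the result directly from the lemmas already established in the excerpt, since Proposition~\ref{cor:cntrex} is essentially a corollary that collects three separate facts about a triangle-free bad graph $G$. First I would record the key structural observations about $G$ itself. A bad graph has, by Definition~\ref{def:bad}, a Hamiltonian cycle $C_n$ on its (odd number of) vertices, so in particular $G$ has minimum degree $\delta(G)\ge 2$, and it certainly contains a $2$-matching since $n\ge 3$. Being triangle-free by hypothesis, $G$ thus satisfies the degree and triangle hypotheses required by Lemmas~\ref{lem:triangle-free}, \ref{lem:strong-clique}, and \ref{lem:extendable}.

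Next I would establish the two assertions of the proposition in turn. For equistability: by Theorem~\ref{thm:bad-equistarable}, every bad graph is equistarable, so $G$ is equistarable; since $G$ is triangle-free, Lemma~\ref{lem:triangle-free} then gives that $\overline{L(G)}$ is equistable. For the absence of a strong clique, I would argue via parity. By Lemma~\ref{lem:strong-clique}, a strong clique in $\overline{L(G)}$ corresponds exactly to a perfect matching in $G$. But $G$ has odd order $n$ (that is the standing assumption in the definition of bad graphs), and a graph with an odd number of vertices can have no perfect matching. Hence $\overline{L(G)}$ has no strong clique.

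Finally I would derive the ``in particular'' clause. The characterization of McAvaney, Robertson and DeTemple~\cite{MR1212874}, quoted in the proof of Lemma~\ref{lem:extendable}, states that a graph is a general partition graph if and only if every edge lies in a strong clique. Since $\overline{L(G)}$ contains no strong clique at all, and since it does contain edges (as $G$ contains a $2$-matching, the corresponding two edges of $G$ form an edge of $\overline{L(G)}$), some edge of $\overline{L(G)}$ fails to lie in a strong clique, so $\overline{L(G)}$ is not a general partition graph.

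I do not expect any genuine obstacle here, as the heavy lifting has been done in Theorem~\ref{thm:bad-equistarable} and the preparatory lemmas; the only point requiring a moment's care is making the parity argument explicit, namely using the oddness of $n$ to rule out a perfect matching, and correctly invoking the edge characterization of general partition graphs rather than the $2$-extendability reformulation (which would be slightly awkward here since the cleanest route is simply ``no strong clique, hence not general partition''). One should also confirm that $\overline{L(G)}$ is nonempty and has at least one edge so that the implication from~\cite{MR1212874} is not vacuous, which follows immediately from the existence of a $2$-matching in $G$.
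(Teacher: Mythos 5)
Your proposal is correct and follows essentially the same route as the paper's proof: Theorem~\ref{thm:bad-equistarable} plus Lemma~\ref{lem:triangle-free} for equistability, and the odd order of $G$ (hence no perfect matching) plus Lemma~\ref{lem:strong-clique} for the absence of a strong clique. Your explicit justification of the ``in particular'' clause via the edge characterization of general partition graphs is a detail the paper leaves implicit, and the only nitpick is that the $2$-matching should be justified by $n\ge 5$ (forced by triangle-freeness) rather than $n\ge 3$, since $C_3$ has no $2$-matching.
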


\begin{proof}
By construction, $G$ is triangle-free, of minimum degree at least $2$, and
contains a matching of size $2$.
Since $G$ is of odd order, it does not have a perfect matching.
Note that since $G$ has a Hamiltonian cycle, it satisfies the condition $\delta(G)\ge 2$.
By Lemma~\ref{lem:strong-clique},
$\overline{L(G)}$ does not have a strong clique.
By Lemma~\ref{lem:triangle-free}, it remains to show that $G$ is equistarable.
But this { follows from} Theorem~\ref{thm:bad-equistarable}.
\end{proof}

Since there exist triangle-free bad graphs (for example, the circulants mentioned in Section~\ref{sec:sufficient}),
Proposition~\ref{cor:cntrex} disproves the conjectures of Orlin and of Miklavi\v c and Milani\v c.
In the next two subsections, we will refine the fact that not every equistable graph is a general partition graph by showing that
there exist equistable graphs that are not strongly equistable, as well as
strongly equistable graphs that are not general partition graphs.

\subsection{Not all equistable graphs are strongly equistable}\label{sec:example}

The definition of strongly equistable graphs motivates the following definition.

\begin{defi}
For a graph $G$, we denote by ${\cal S^*}(G)$ the set of all maximal stars of $G$, and by ${\cal T^*}(G)$ the set of all other nonempty subsets of $E(G)$.
A graph $G = (V,E)$ without isolated vertices is said to be {\em strongly equistarable} if
for each $T\in {\cal T}^*(G)$ and each $\gamma\le 1$ there exists a function
$\varphi:E\to \R_+$ such that  $\varphi(S) = 1{\rm\ for\ all\ } S\in {\cal S}^*(G)$, and $\varphi(T)\neq \gamma$.
\end{defi}

The following observation can be proved similarly as Lemma~\ref{lem:triangle-free}.

\begin{lem}\label{lem:triangle-free-2}
Let $G$ be a triangle-free graph.
Then $G$ is strongly equistarable if and only if $\overline{L(G)}$ is strongly equistable.\hfill\qed
\end{lem}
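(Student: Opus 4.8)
The plan is to mimic the proof of Lemma~\ref{lem:triangle-free} essentially verbatim, translating the star/edge structure of $G$ into the stable-set/vertex structure of $\overline{L(G)}$ via the correspondence already established. The key observation, already recorded in the excerpt, is that when $G$ is triangle-free, the maximal cliques of $L(G)$ are exactly the maximal stars of $G$, and hence the maximal stable sets of $\overline{L(G)}$ are exactly the maximal stars of $G$. Since $V(\overline{L(G)}) = E(G)$, this gives a bijection between subsets $F \subseteq E(G)$ and subsets $F \subseteq V(\overline{L(G)})$ under which ${\cal S}^*(G)$ corresponds to ${\cal S}(\overline{L(G)})$, and consequently ${\cal T}^*(G)$ corresponds to ${\cal T}(\overline{L(G)})$.

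Given this dictionary, I would proceed as follows. A weight function $\varphi : E(G) \to \R_+$ is literally the same object as a weight function $\varphi : V(\overline{L(G)}) \to \R_+$. The condition that $\varphi(S) = 1$ for all $S \in {\cal S}^*(G)$ is, under the correspondence, exactly the condition that $\varphi(S) = 1$ for all $S \in {\cal S}(\overline{L(G)})$; here one notes that total weight is preserved since it is just the sum of the same weights over the same underlying set of edges/vertices. Now fix $T \in {\cal T}^*(G)$ and $\gamma \le 1$. By the strong equistarability of $G$, there is a $\varphi$ with $\varphi(S) = 1$ for all maximal stars $S$ and $\varphi(T) \neq \gamma$; viewing $\varphi$ as a vertex weighting of $\overline{L(G)}$, the same $\varphi$ witnesses the strong equistability condition for the set $T \in {\cal T}(\overline{L(G)})$ and the same $\gamma$. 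The converse direction is symmetric: any witnessing weight function for $\overline{L(G)}$ pulls back to one for $G$.

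I do not expect any genuine obstacle here, since the statement is a straightforward reformulation across the two languages and the heavy lifting (identifying maximal stars with maximal stable sets in the triangle-free case) was already done in Lemma~\ref{lem:triangle-free}. The only point requiring the slightest care is verifying that the quantifier structure in the definition of strongly equistable matches that of strongly equistarable under the correspondence, namely that the ``for each $T$ and each $\gamma$'' in one definition lines up exactly with the other. Since the set of admissible $T$'s (the non-maximal-stable-set subsets on one side, the non-maximal-star subsets on the other) are in bijection, and the range of $\gamma$ is the same interval $(-\infty, 1]$ on both sides, this alignment is immediate. The proof is therefore essentially a one-to-one translation, which justifies the remark in the excerpt that it ``can be proved similarly as Lemma~\ref{lem:triangle-free}.''
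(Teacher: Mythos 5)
Your proof is correct and is exactly the argument the paper intends: the paper omits the proof, remarking only that it ``can be proved similarly as Lemma~\ref{lem:triangle-free}'', and your translation via the identification of maximal stars of $G$ with maximal stable sets of $\overline{L(G)}$ (and hence of ${\cal S}^*(G)$ with ${\cal S}(\overline{L(G)})$ and ${\cal T}^*(G)$ with ${\cal T}(\overline{L(G)})$) is precisely that similar proof. No gaps.
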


Let $G^*$ be the graph depicted in Fig.~\ref{fig:example}.

\begin{figure}[h!]
  \begin{center}
\includegraphics[height=40mm]{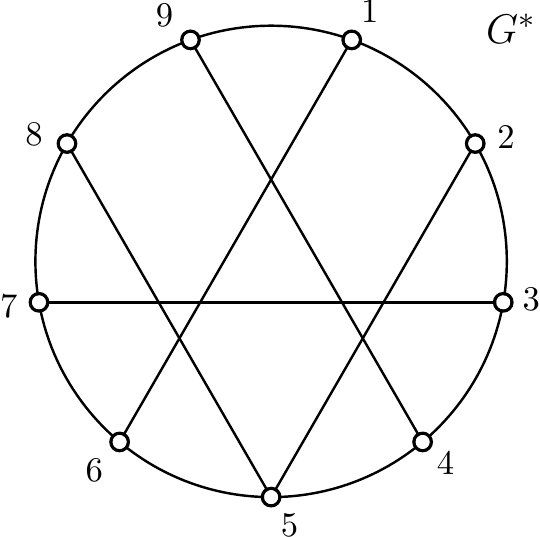}
  \end{center}
\caption{A $9$-vertex, $14$-edge bad graph.}
\label{fig:example}
\end{figure}

\begin{prop}\label{prop:G*}
Graph $G^*$ is equistarable but not strongly equistarable.
\end{prop}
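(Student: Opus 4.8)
The plan is to prove the two parts separately, leveraging the structural results already established. For the positive part, namely that $G^*$ is equistarable, I would first verify that the graph depicted in Fig.~\ref{fig:example} is indeed bad in the sense of Definition~\ref{def:bad}: I would exhibit a Hamiltonian cycle $C_9$ on its $9$ vertices and check that for every pair of disjoint edges $e,e'\in E(C_9)$, one of the five chords (the edges in $F$, of which there are $14-9=5$) serves as either an $(e,e')$-non-crossing even chord or an $(e,e')$-crossing odd chord. Once this verification is complete, equistarability follows immediately from Theorem~\ref{thm:bad-equistarable}, which asserts that every bad graph is equistarable. This reduces the positive part to a finite, albeit somewhat tedious, case check over the disjoint edge pairs of a $9$-cycle.

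For the negative part, namely that $G^*$ is \emph{not} strongly equistarable, I would argue directly from the definition. Recall that $G^*$ being strongly equistarable means that for every $T\in{\cal T}^*(G^*)$ and every $\gamma\le 1$, there is an edge-weight function $\varphi$ giving every maximal star total weight $1$ while avoiding $\varphi(T)=\gamma$. To refute this, I would identify a specific subset $T\subseteq E(G^*)$ that is not a maximal star, together with a specific value $\gamma\le 1$, such that \emph{every} function $\varphi$ assigning weight $1$ to all maximal stars is forced to satisfy $\varphi(T)=\gamma$. The natural candidate is to exploit the rigidity of the linear system \eqref{equation-v}: since $G^*$ has minimum degree at least $2$ and a Hamiltonian cycle, every star is maximal, so the constraints $\varphi(E(v))=1$ for all nine vertices $v$ form a highly constrained system on the $14$ edge-variables. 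I would look for a rational linear combination of these star-constraints that forces the total weight of some non-star edge set $T$ to equal a fixed constant $\gamma$; concretely, one seeks coefficients $c_v$ so that $\sum_v c_v\,\mathbf{1}_{E(v)}=\mathbf{1}_T$ as vectors over the edges, whence $\varphi(T)=\sum_v c_v=\gamma$ is forced for all admissible $\varphi$.

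The existence of such a forced linear dependency is governed by the rank of the vertex--edge incidence structure restricted to stars. With $9$ equations and $14$ unknowns the solution space to the star constraints is generically $5$-dimensional, but the parity structure of the odd cycle $C_9$ (reflected in formula \eqref{beta}, where the alternating sum over an odd cycle does not telescope freely) creates a global linear relation among the star weights. I would compute this relation explicitly: summing the star constraints with alternating signs around the Hamiltonian cycle and tracking the chord contributions should yield a fixed value for a suitable combination of edge weights, independent of the free parameters. That fixed combination identifies the pair $(T,\gamma)$ witnessing the failure of strong equistarability.

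The main obstacle I anticipate is the negative part: pinning down the exact set $T$ and the exact value $\gamma$ requires understanding precisely which linear functionals on the edge weights are \emph{constant} across all solutions of the star-constraint system, and this depends delicately on the particular $14$ edges of $G^*$ rather than on the general bad-graph structure. Unlike the equistarability argument, which is uniform for all bad graphs, the non-strong-equistarability is a feature specific to this $9$-vertex example, so no appeal to a general theorem is available and the computation must be carried out on the concrete incidence matrix of $G^*$. I would therefore expect to spend most of the effort setting up the incidence matrix, finding its left kernel (the space of linear dependencies among the star constraints), and reading off from a kernel vector the forced identity $\varphi(T)=\gamma$ together with a verification that $T$ is genuinely not a maximal star.
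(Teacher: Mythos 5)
Your proposal is correct and follows essentially the same route as the paper: the positive part is the identical reduction to Theorem~\ref{thm:bad-equistarable} via a finite check of badness, and the negative part rests on the same linear-algebra fact, namely that the indicator vector of a suitable non-star pair $T$ (the paper uses $T=\{\{1,9\},\{3,7\}\}$, forcing $\varphi(T)=1/2$) is constant on the affine solution set of the star-constraint system. The only cosmetic difference is that you phrase this dually --- seeking coefficients $c_v$ with $\sum_v c_v\,\mathbf{1}_{E(v)}=\mathbf{1}_T$, i.e.\ membership of $\mathbf{1}_T$ in the row space of the incidence matrix --- whereas the paper equivalently exhibits an explicit basis of signed even-cycle vectors for the kernel and checks that $\mathbf{1}_T$ is orthogonal to each of them.
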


\begin{proof}
Let $G^* = (V,E)$ where the vertices are $V = \{1,\ldots, 9\}$ (see Fig.~\ref{fig:example}). Clearly, $G^*$ is a triangle-free graph.

We claim that $G^*$ is a bad graph. To see this, consider the Hamiltonian cycle $C = (1,2,\ldots, 9,1)$ in $G^*$.
For two disjoint edges $e,e'\in E(C)$, we say that an edge $f\in E(G^*)\setminus E(C)$ is a {\it witness for
$e$ and $e'$} if $f$ is either an $(e,e')$-non-crossing even chord of $C$, or
an $(e,e')$-crossing odd chord of $C$.
In Table~\ref{table}, we give witnesses for certain pairs of disjoint edges of $C$.
Each of the remaining pairs is easily checked to be symmetric to one of the pairs in the table.
This shows that $G^*$ is bad.

\begin{table}[h!]
\begin{center}
\begin{tabular}{|cc|cc|cc|}
  \hline
  % after \\: \hline or \cline{col1-col2} \cline{col3-col4} ...
  \rule{0.3cm}{0cm}$\{e,e'\}$\rule{0.3cm}{0cm} & witness & \rule{0.3cm}{0cm}$\{e,e'\}$\rule{0.3cm}{0cm} & witness & \rule{0.3cm}{0cm}$\{e,e'\}$\rule{0.3cm}{0cm} & witness\\
  \hline
$\{19,23\}$ & $16$ & $\{12,78\}$ & $49$ & $\{34,56\}$ & $25$\\
$\{19,34\}$ & $25$ & $\{12,89\}$ & $37$ & $\{34,67\}$ & $25$\\
$\{19,45\}$ & $16$ & $\{23,45\}$ & $25$ & $\{34,78\}$ & $25$\\
$\{12,34\}$ & $25$ & $\{23,56\}$ & $49$ & $\{34,89\}$ & $25$\\
$\{12,45\}$ & $37$ & $\{23,67\}$ & $16$ & $\{45,67\}$ & $16$\\
$\{12,56\}$ & $37$ & $\{23,78\}$ & $49$ & $\{45,78\}$ & $16$\\
$\{12,67\}$ & $37$ & $\{23,89\}$ & $16$ & $\{45,89\}$ & $16$\\
 \hline
\end{tabular}
\caption{Checking badness of $G^*$\label{table}}
\end{center}
\end{table}

By Theorem~\ref{thm:bad-equistarable}, $G^*$ is equistarable.
We assume the notation from the proof of Theorem~\ref{thm:bad-equistarable} (when specified to $G^*$), in particular,
$C=C_9$ denotes the Hamiltonian cycle $(1,\ldots, 9,1)$, and $F = \{f_1,\ldots, f_5\}$ denotes the set of edges of $G^*$ not in $C$.

Let $T = \{\{1,9\},\{3,7\}\}\in {\cal T}^*(G^*)$. To show that $G^*$ is not strongly equistarable, we will prove that
for every  $\varphi:E\to \R_+$ such that  $\varphi(S) = 1{\rm\ for\ all\ } S\in {\cal S}^*(G^*)$, we have
$\varphi(T)= 1/2$.

Every mapping $\varphi:E\to \R_+$ such that  $\varphi(S) = 1{\rm\ for\ all\ } S\in {\cal S}^*(G^*)$
naturally corresponds to a solution $x\in \mathbb{R}^E$ of the linear system $Ax = \mathbf{1}$, where
$A\in \{0,1\}^{V\times E}$ is the incidence matrix of $G^*$ (having $a_{v,e} = 1$ if and only if $v$ is an endpoint of $e$),
and $\mathbf{1}\in \mathbb{R}^V$ is the all-one vector of length $9$.
Since $G^*$ is equistarable, the system has a solution.

It is known that the kernel of the incidence matrix of a connected non-bipartite graph $G$
is of dimension $|E(G)|-|V(G)|$ (see, e.g., \cite[Corollary 5.3]{MR1306983}).
In the case of $G^*$, this implies that the kernel of $A$ is of dimension $5$.
A basis for the kernel of $A$ can be obtained as follows.
Each of the $5$ edges $f_i\in F$
forms a unique
even cycle $C_i$
together with a subpath of $C$
(in particular, $E(C_i)\cap F = \{f_i\}$). Fixing a cyclic ordering $e^i_1,\ldots, e^i_{2n_i}$ of the
edges of $C_i$, let the vector $x^i\in \mathbb{R}^E$ be defined as
$$x^i_e = \left\{
           \begin{array}{ll}
             (-1)^j, & \hbox{if $e = e^i_j$ for some $j\in \{1,\ldots, 2n_i\}$} \\
%is odd} \\
%             -1, & \hbox{if $e = e^i_j$ where $j\in \{1,\ldots, 2n_i\}$ is even} \\
            0, & \hbox{otherwise.}
           \end{array}
         \right.
$$
It can be seen that the vectors $x^1,\ldots, x^5$ are linearly independent
and that $Ax^i = 0$ for all $i\in \{1,\ldots, 5\}$. Therefore,
$\{x^1,\ldots, x^5\}$ is a basis of the kernel of $A$.
(In fact, this is a special case of a more general construction, see~\cite[p.~$302$]{MR1306983}.)

It follows that every solution $x\in \mathbb{R}^E$ of the system $Ax = \mathbf{1}$ is of the form
$$x = x^0+\sum_{i = 1}^5 \lambda_i x^i\quad {\textrm{for some }}\lambda_1,\ldots, \lambda_5\in \mathbb{R}\,,$$
where $x^0$ is any particular solution to $Ax = \mathbf{1}$.
Let us take for $x^0$ the vector in
$\mathbb{R}^E$ defined by
$$x^0_e = \left\{
            \begin{array}{ll}
              1/2, & \hbox{if $e\in C$} \\
              0, & \hbox{otherwise.}
            \end{array}
          \right.
$$

Now, let $\varphi:E\to \R_+$ be an arbitrary mapping with $\varphi(S) = 1{\rm\ for\ all\ } S\in {\cal S}^*(G^*)$.
Identifying $\varphi$ with a vector in $\R^E$, we can write
$$\varphi = x^0+\sum_{i = 1}^5 \lambda_i x^i$$
for some $\lambda_1,\ldots, \lambda_5\in \mathbb{R}$.
We will now show that for the set $T = \{e,f\}\in {\cal T}^*(G^*)$,
where $e = \{1,9\}$ and $f = \{3,7\}$,
we have  $\varphi(T) = 1/2$.
Without loss of generality, we can assume that $f = f_5$.
Observe that the edge $e$ belongs to $C_5$ but not to any of $C_1,\ldots, C_4$. The same of course holds for the edge $f$.
This implies that $x^i_e = x^i_f = 0$ for all $i\in \{1,\ldots, 4\}$.
Moreover, edges $e$ and $f$ are at distance $2$ on $C_5$, which implies
$x^5_{e}+x^5_{f} = 0$. By the definition of $x^0$, we have $x^0_e = 1/2$ and $x^0_f = 0$.
Putting it all together,  we have
\begin{eqnarray*}
% \nonumber to remove numbering (before each equation)
\varphi(\{e,f\}) &=&
x^0_{e}+\sum_{i = 1}^5 \lambda_i x^i_{e}
+x^0_{f}+\sum_{i = 1}^5 \lambda_i x^i_{f}\\
&=& \frac{1}{2} + \lambda_5(x^5_{e}+x^5_{f})= \frac{1}{2}\,.
\end{eqnarray*}
\end{proof}

\begin{cor}
The graph $\overline{L(G^*)}$ is equistable but not strongly equistable. In particular, it disproves the conjecture of Mahadev et al.
\end{cor}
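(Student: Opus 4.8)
The plan is to deduce the corollary directly from Proposition~\ref{prop:G*} by transporting both of its conclusions through the two translation lemmas that relate equistarability of a triangle-free graph to equistability of the complement of its line graph. The entire argument is essentially a two-line application of machinery already in place; the only hypothesis that needs checking is that $G^*$ is triangle-free, which is recorded at the very start of the proof of Proposition~\ref{prop:G*}.

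With triangle-freeness in hand, both Lemma~\ref{lem:triangle-free} and Lemma~\ref{lem:triangle-free-2} apply to $G = G^*$. First I would invoke the first half of Proposition~\ref{prop:G*}, that $G^*$ is equistarable: by Lemma~\ref{lem:triangle-free} this is equivalent to $\overline{L(G^*)}$ being equistable, giving the first assertion. Then I would invoke the second half of Proposition~\ref{prop:G*}, that $G^*$ is \emph{not} strongly equistarable: by Lemma~\ref{lem:triangle-free-2} this is equivalent to $\overline{L(G^*)}$ being \emph{not} strongly equistable, giving the second assertion. Together these yield exactly the two claims in the statement.

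For the final ``in particular'' clause, I would recall that the conjecture of Mahadev et al.~asserts that every equistable graph is strongly equistable. Since $\overline{L(G^*)}$ has just been shown to be equistable but not strongly equistable, it is a counterexample to that conjecture, completing the proof. I do not anticipate any genuine obstacle here, as all the substantive work has been carried out in Proposition~\ref{prop:G*} and in the two structural lemmas, and the corollary merely combines them; the sole point requiring a moment's care is to confirm that the triangle-freeness hypothesis is available before applying the lemmas, since that is precisely the condition under which the equivalences between $G^*$ and $\overline{L(G^*)}$ hold.
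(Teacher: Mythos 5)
Your proposal is correct and matches the argument the paper intends (the corollary is stated without an explicit proof, as an immediate consequence): it combines Proposition~\ref{prop:G*} with Lemmas~\ref{lem:triangle-free} and~\ref{lem:triangle-free-2}, using the triangle-freeness of $G^*$ noted at the start of the proof of Proposition~\ref{prop:G*}. Nothing is missing.
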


The line graph of $G^*$ is depicted in Fig.~\ref{fig:line}.

\begin{figure}[!ht]
  \begin{center}
\includegraphics[height=50mm]{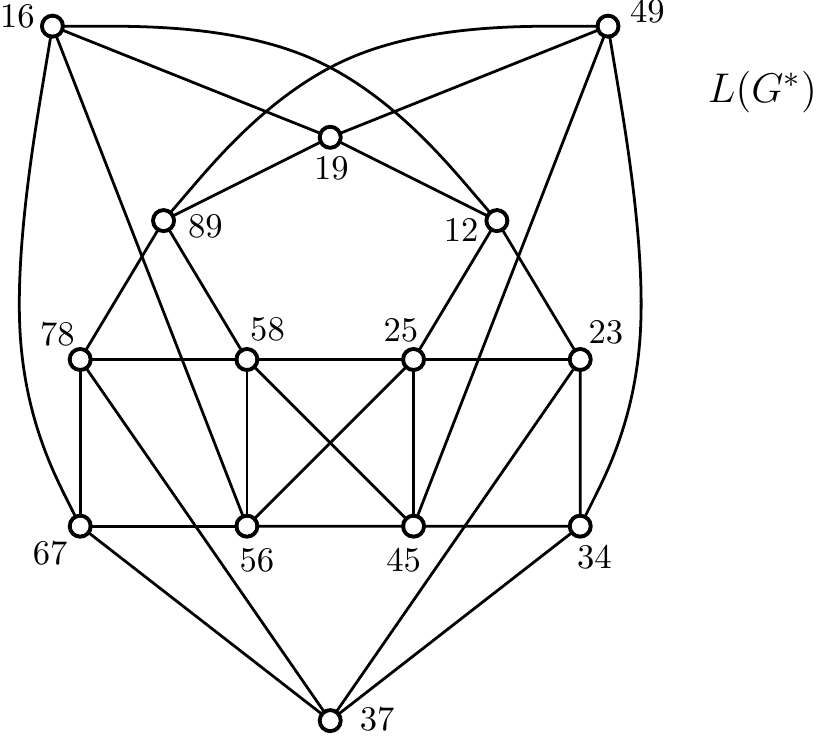}
  \end{center}
\caption{A $14$-vertex graph whose complement is equistable but not strongly equistable.}
\label{fig:line}
\end{figure}

%\medskip
%Since $G^*$ is triangle-free, it can be seen that $L(G^*)$ is diamond-free.
%In particular, the conjectures of Mahadev-Peled and Sun, of Orlin, and of Miklavi\v c-Milani\v c
%fail for the complements of (claw, diamond)-free graphs.

\subsection{Not all strongly equistable graphs are general partition graphs}

\begin{prop}\label{prop:C_11_3}
The circulant $G = C_{11}(\{1,3\})$ is strongly equistarable.
\end{prop}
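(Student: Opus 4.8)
The plan is to prove that $C_{11}(\{1,3\})$ is strongly equistarable by explicitly analyzing the affine space of weight functions satisfying all the star constraints, and showing that for every non-star edge subset $T$, the weight $\varphi(T)$ is \emph{not} constant over this affine space (except possibly forced to a single value different from the threshold, which we can still dodge). First I would set up the exact same linear-algebraic framework as in Proposition~\ref{prop:G*}: let $A$ be the $11\times 22$ incidence matrix of $G = C_{11}(\{1,3\})$, so that the mappings $\varphi : E \to \R$ with $\varphi(S) = 1$ for every maximal star $S$ correspond exactly to solutions of $Ax = \mathbf{1}$. Since $G$ is connected and non-bipartite (it contains the odd Hamiltonian cycle $C_{11}$), the kernel of $A$ has dimension $|E| - |V| = 22 - 11 = 11$, and I would produce an explicit basis $\{x^1,\ldots,x^{11}\}$ of $\ker A$ using the even-cycle construction from the previous proof: each chord $f_i$ (the distance-$3$ edges) together with a subpath of the Hamiltonian cycle forms an even cycle $C_i$, and the alternating $\pm 1$ indicator vector on $E(C_i)$ lies in $\ker A$.

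Next I would describe the general solution as $\varphi = x^0 + \sum_{i} \lambda_i x^i$ for the particular solution $x^0$ assigning $1/2$ to every cycle edge and $0$ to every chord, with $\lambda_i$ ranging over $\R$. The crucial point is that strong equistarability requires, for each non-star $T \in \mathcal{T}^*(G)$ and each $\gamma \le 1$, a choice of the $\lambda_i$ (yielding nonnegative weights) with $\varphi(T) \ne \gamma$. The strategy splits into two cases. If the linear functional $\lambda \mapsto \varphi(T) = \sum_{e \in T} \big(x^0_e + \sum_i \lambda_i x^i_e\big)$ is \emph{non-constant} in $\lambda$, then $\varphi(T)$ takes infinitely many values as $\lambda$ varies over a neighborhood keeping the weights positive (recall from Theorem~\ref{thm:bad-equistarable} that $\varphi \equiv 1/2$ on cycle edges is strictly feasible, so there is an open ball of admissible $\lambda$), so we can always avoid the single forbidden value $\gamma$. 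The interesting case is when $\varphi(T)$ is \emph{constant} over the whole affine space, i.e.\ $\sum_{e \in T} x^i_e = 0$ for all basis vectors $i$; then $\varphi(T)$ equals the fixed value $c_T := \sum_{e \in T} x^0_e$, and I must verify that $c_T \ne \gamma$ can still be arranged — which holds automatically unless the forbidden value satisfies $\gamma = c_T$.

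The key structural claim I would isolate is therefore: \emph{for every non-star $T$ on which $\varphi(T)$ is forced to be constant, that constant value $c_T$ is strictly greater than $1$.} If I can establish $c_T > 1$ whenever $T$ is a constant set, then since the definition only requires evading $\gamma \le 1$, and $c_T > 1 \ge \gamma$, the constant value already differs from every admissible $\gamma$, completing the argument. To prove this claim I would characterize which $T$ give a constant $\varphi(T)$: the condition $\sum_{e\in T} x^i_e = 0$ for all $i$ means the indicator vector $\mathbf{1}_T$ is orthogonal to $\ker A$, hence lies in the row space of $A$, i.e.\ $\mathbf{1}_T = A^\top y$ for some $y \in \R^V$. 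Using the incidence structure this says $T$ is, in a weighted sense, a union of stars, and the symmetry of the circulant $C_{11}(\{1,3\})$ under its dihedral automorphism group should reduce the verification to finitely many orbit representatives. The main obstacle I anticipate is precisely this case analysis: enumerating (up to the automorphisms of $C_{11}(\{1,3\})$) the subsets $T$ for which $\mathbf{1}_T \in \operatorname{row}(A)$ and checking $c_T > 1$ for each. This is where I expect the real work to lie, and where an explicit computation or a clever invariant (for instance, relating $c_T$ to the number of cycle edges in $T$ via the $x^0_e = 1/2$ weighting) will be needed; the linear-algebra scaffolding around it is routine once the framework of Proposition~\ref{prop:G*} is in place.
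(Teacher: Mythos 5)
Your scaffolding coincides with the paper's: identify the valid weightings with the affine space $x^0+\ker A$, use the even-cycle vectors as a basis of $\ker A$, dispose of any $T$ on which the functional $\lambda\mapsto\varphi(T)$ is non-constant by perturbing inside the strictly positive solution supplied by Theorem~\ref{thm:bad-equistarable}, and reduce everything to the sets $T$ on which $\varphi(T)$ is forced to a constant $c_T$. The gap is that you stop exactly at the step that carries all the content. The claim you isolate --- every non-star $T$ with $\mathbf{1}_T\in\operatorname{row}(A)$ satisfies $c_T>1$ --- is not a routine verification that can be deferred to ``where the real work lies'': it \emph{is} the proposition. Indeed, the analogous claim is false for the closely related graph $G^*$ of Proposition~\ref{prop:G*}, where $T=\{\{1,9\},\{3,7\}\}$ is a non-star set with $\varphi(T)$ forced to equal $1/2\le 1$; that is precisely why $G^*$ fails to be strongly equistarable. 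So nothing in the linear-algebra framework guarantees your key claim, and it must be checked against the specific chord structure of $C_{11}(\{1,3\})$. Your proposal contains no such check, only a description of how one might organize it (via $\mathbf{1}_T=A^\top y$ and $c_T=\sum_v y_v$, reduced by the dihedral symmetry), so as written the proof is incomplete.

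For comparison, the paper avoids enumerating the $0/1$ vectors in $\operatorname{row}(A)$ altogether. It fixes the explicit weighting $\varphi$ from Theorem~\ref{thm:bad-equistarable}, assumes $\varphi(T)=\gamma$, and runs a combinatorial case analysis on $T\cap E(C)$, where $C$ is the Hamiltonian cycle: if $T\cap E(C)=\emptyset$ or $E(C)\subseteq T$, an explicit perturbation along a $4$-cycle through a suitable chord changes $\varphi(T)$ and yields a good weighting; otherwise it takes a longest subpath of $C$ contained in $T$ and uses a short chain of perturbations to force specific further edges into $T$, until $\varphi(T)>1\ge\gamma$ can be read off from the fact that every edge of $C$ has weight exceeding $1/3$. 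If you want to pursue your row-space route instead, you must actually produce the orbit representatives of the constant sets and compute $c_T$ for each; until that enumeration is carried out, the argument does not establish the statement.
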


\begin{proof}
We proceed similarly as in the proof of Proposition~\ref{prop:G*}.
Let $G = C_{11}(\{1,3\})$. We assume that $V = V(G) = \{0,1,\ldots, 10\}$ and $E = E(G)= \{\{i,j\}\,:\,0\le i<j\le 10,~j-i\in \{1,3\} \textrm{ or } 11+i-j\in \{1,3\}\}$.
Recall that $G$ is a triangle-free bad graph. By Theorem~\ref{thm:bad-equistarable}, $G$ is equistarable.

We will verify that $G$ is strongly equistarable by definition.
Let $T \in {\cal T}^*(G)$ and $\gamma\le 1$.
We would like to show that $G$ admits a mapping $\varphi:E\to\R_+$
such that $\varphi(S) = 1{\rm\ for\ all\ } S\in {\cal S}^*(G)$ and $\varphi(T)\neq\gamma$.
We will refer to such a mapping as a {\it good weighting of $G$}.

If $T$ contains a star (equivalently: a maximal star) of $G$, then taking any equistarable weight function $\varphi$ of $G$
(for example, one constructed as in the proof of Theorem~\ref{thm:bad-equistarable}), we have
$\varphi(T)>1$ and hence $\varphi(T)\neq \gamma$.
So we may assume that $T$ does not contain any star of $G$.

Let $\varphi$ be an equistarable weight function of $G$ as constructed in the proof of Theorem~\ref{thm:bad-equistarable}. We
may assume that $\varphi(T) = \gamma$ (otherwise we are done).
Denoting by $C$ the Hamiltonian cycle $(0,1,\ldots, 10,0)$, and
proceeding as in the proof of Proposition~\ref{prop:G*}, we infer that
 $\varphi$
%:E\to \R_+$ such that  $\varphi(S) = 1{\rm\ for\ all\ } S\in {\cal S}^*(G)$
is of the form
$$\varphi = x^0+\sum_{i = 1}^{11}\lambda_i x^i$$
for some $\lambda_1,\ldots, \lambda_{11}\in \mathbb{R}$,
where
$x^0\in \mathbb{R}_+^E$ is the vector taking value $1/2$ on the edges $C$ (and $0$ on all other edges),
and
$x^i\in \{0,1,-1\}^{E}$ are the vectors taking values $1$ and $-1$ alternatingly
on the edges of an even cycle containing a unique chord of $C$ (and $0$ on all other edges).

{Suppose first that $T\cap E(C)= \emptyset$.
By the rotational symmetry of $G$, we can assume that $\{0,3\}\in T$. But now, for small enough $\epsilon>0$,
the function $\varphi':E\to\mathbb{R}$, defined by
$$\varphi'(e) = \left\{
                  \begin{array}{ll}
                    \varphi(e)+\epsilon, & \hbox{if $e\in \{\{0,1\},\{2,3\}\}$;} \\
                    \varphi(e)-\epsilon, & \hbox{if $e\in \{\{0,3\},\{1,2\}\}$;} \\
                    \varphi'(e), & \hbox{otherwise.}
                  \end{array}
                \right.$$
is a good weighting of $G$.
The function $\varphi'$ was obtained from $\varphi$ by adding a small nonzero multiple of one of the
$x^i$'s (namely, of the one corresponding to the cycle $(0,1,2,3,0)$). Any
such modification of $\varphi$ will be called an {\it $e$-perturbation}
where $e$ is the unique chord of $C$ such that $x^i_e\neq 0$.

%Then a good weighting can be obtained
%by an appropriate choice of $\alpha_1,\ldots, \alpha_{11}$
%leading to an equistarable weight function $\varphi$ of $G$ constructed as in the proof of Theorem~\ref{thm:bad-equistarable}.
%So we can assume that $T\cap E(C)\neq  \emptyset$.}

From now on, assume that $T\cap E(C)\neq  \emptyset$.
If $E(C)\subseteq T$, then, since $T$ contains no star, there exists a chord $e$ of $C$ such that $e\not\in T$, and
% and by the rotational symmetry of $G$,
%we can assume that $\{0,3\}\not\in T$. But now,
an $e$-perturbation of $\varphi$ will result in a good weighting of $G$.}
%The function $\varphi'$ was obtained from $\varphi$ by adding a small nonzero multiple of one of the
%$x^i$'s (namely, of the one corresponding to the cycle $(0,1,2,3,0)$). Any
%such modification of $\varphi$ will be called an {\it $e$-perturbation}
%where $e$ is the unique chord of $C$ such that $x^i_e\neq 0$.
Suppose now that $E(C)\nsubseteq T$.  Let $P$ be a longest subpath of $C$ all of whose edges are in $T$.
Due to the symmetry of $G$, we may assume that $0$ is an endpoint of $P$ and that
$\{0,10\}\not\in E(P)$.
If $\{2,10\}\in T$, then a $\{2,10\}$-perturbation
of $\varphi$ will result in a good weighting of $G$.
So we may assume that $\{2,10\}\not\in T$. Using the same perturbation, we can see that
$\{1,2\}\in T$.
Now, to avoid a good weighting of $G$ obtained from applying a $\{1,9\}$-perturbation to $\varphi$, we infer that
$\{1,9\}\in T$.
%Consequently, since $T$ contains no star, we have $\{1,4\}\not\in T$.
%To avoid a good weighting of $G$ obtained from applying a $\{1,4\}$-perturbation to $\varphi$, we infer that
%$\{2,3\}\in T$.
Recall that by the construction of $\varphi$, we have $\varphi(e)>1/3$ for all $e\in E(C)$.
Therefore,
$$\varphi(T)\ge \varphi(\{0,1\}) +\varphi(\{1,2\}) +
\varphi(\{1,9\}) >1\,,$$
which implies that $\varphi(T)\neq \gamma$.
%We showed that for every $T \in {\cal T}^*(G)$ and every $\gamma\le 1$,
%$G$ admits a mapping $\varphi:E\to\R_+$
%such that $\varphi(S) = 1{\rm\ for\ all\ } S\in {\cal S}^*(G)$ and $\varphi(T)\neq\gamma$.
%This completes the proof that $G$ is strongly equistarable.
\end{proof}

Propositions~\ref{cor:cntrex} and~\ref{prop:C_11_3} imply the following.

\begin{cor}
The graph $\overline{L(C_{11}(\{1,3\}))}$ is
a strongly equistable graph that is not a general partition graph.
\end{cor}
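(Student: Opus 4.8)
The plan is to read off both asserted properties of $\overline{L(C_{11}(\{1,3\}))}$ directly from the two propositions cited immediately before the statement, using the triangle-free transfer lemmas to pass from a property of the root graph $G = C_{11}(\{1,3\})$ to the corresponding property of $\overline{L(G)}$. So the entire corollary is an assembly step: I would set $G = C_{11}(\{1,3\})$, record once and for all that $G$ is triangle-free and bad, and then treat the two halves (not general partition; strongly equistable) separately.

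For the negative half, I would first justify the two hypotheses of Proposition~\ref{cor:cntrex}, namely that $G$ is triangle-free and bad. Triangle-freeness follows from a short parity argument: any triangle would give three steps, each congruent to $\pm 1$ or $\pm 3 \pmod{11}$, summing to $0 \pmod{11}$; but three such steps sum to an odd integer lying in $[-9,9]$, which cannot be $0$, so no triangle exists. Badness is precisely the assertion made (for the whole family $C_n(\{1,3\})$ with odd $n \ge 11$) in Section~\ref{sec:sufficient} and recalled in the proof of Proposition~\ref{prop:C_11_3}. With both hypotheses in hand, Proposition~\ref{cor:cntrex} immediately yields that $\overline{L(G)}$ is an equistable graph without a strong clique, and in particular not a general partition graph.

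For the positive half, I would invoke Proposition~\ref{prop:C_11_3}, which states exactly that $G = C_{11}(\{1,3\})$ is strongly equistarable, and then apply Lemma~\ref{lem:triangle-free-2}: since $G$ is triangle-free, strong equistarability of $G$ is equivalent to strong equistability of $\overline{L(G)}$. Combining the two halves gives that $\overline{L(G)}$ is strongly equistable but not a general partition graph, which is the statement. There is essentially no analytic obstacle here, as the substantive arguments were already carried out in Proposition~\ref{prop:C_11_3} and in the transfer lemmas; the only point requiring care is to confirm that the hypotheses of each invoked result hold for $G$, most notably its triangle-freeness (needed for Lemma~\ref{lem:triangle-free-2}) and its badness (needed for Proposition~\ref{cor:cntrex}).
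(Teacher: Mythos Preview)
Your proposal is correct and follows exactly the route the paper intends: the corollary is stated in the paper as an immediate consequence of Propositions~\ref{cor:cntrex} and~\ref{prop:C_11_3}, and you have simply unpacked that implication, correctly noting that Lemma~\ref{lem:triangle-free-2} is the bridge from strong equistarability of $G$ to strong equistability of $\overline{L(G)}$. Your parity argument for triangle-freeness is also correct and fills in a detail the paper merely asserts.
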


\section{An alternative proof of the fact that every strongly equistable graph is equistable}\label{sec:proof}

The fact that every strongly equistable graph is equistable was proved in~\cite{MR1268776}
using a dimension argument. Here we give an alternative, more constructive proof of this fact.

\begin{thm}[Mahadev et al.~\cite{MR1268776}]\label{thm:eq-seq}
Every strongly equistable graph is equistable.
\end{thm}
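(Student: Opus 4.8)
My plan is to show that from the defining property of strong equistability — the ability to separate each non-maximal-stable set $T$ from each threshold $\gamma \le 1$ — one can assemble a single weight function witnessing equistability. The key conceptual point is that strong equistability gives, for each forbidden subset $T \in \mathcal{T}(G)$, a weight function under which the maximal stable sets all have weight $1$ but $T$ does not have weight $1$ (take $\gamma = 1$). The goal is to combine these individual witnesses into one uniform witness that simultaneously keeps all maximal stable sets at weight $1$ while forcing \emph{every} $T \in \mathcal{T}(G)$ to have weight $\ne 1$.

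First I would fix notation: let $\mathcal{S}(G) = \{S_1,\ldots,S_m\}$ be the maximal stable sets and consider the affine subspace $\mathcal{W} = \{\varphi \in \R^{V} : \varphi(S_i) = 1 \text{ for all } i\}$ of weight vectors (temporarily dropping the positivity constraint) that assign total weight $1$ to every maximal stable set. Strong equistability guarantees that $\mathcal{W}$ is nonempty, and that for each fixed $T \in \mathcal{T}(G)$ there is a point of $\mathcal{W}$ with $\varphi(T) \ne 1$. For a fixed $T$, the condition $\varphi(T) = 1$ is a single affine equation restricted to $\mathcal{W}$; since it fails somewhere on $\mathcal{W}$, the locus $\{\varphi \in \mathcal{W} : \varphi(T) = 1\}$ is a \emph{proper} affine subspace of $\mathcal{W}$, hence of strictly smaller dimension and in particular a set of measure zero (nowhere dense) inside $\mathcal{W}$. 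Taking the finite union over all $T \in \mathcal{T}(G)$, the set of ``bad'' weightings still has measure zero in $\mathcal{W}$, so its complement is dense; a generic point $\varphi \in \mathcal{W}$ therefore satisfies $\varphi(T) \ne 1$ for every $T \in \mathcal{T}(G)$ simultaneously.

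The step I expect to be the main obstacle is reconciling this genericity argument with the \emph{positivity} requirement $\varphi : V \to \R_+$ in the definition of equistable. The affine subspace $\mathcal{W}$ need not meet the positive orthant in an open set, and the generic point produced above could have negative or zero coordinates. To handle this I would start from a known strictly positive point of $\mathcal{W}$ — for instance by averaging or perturbing one of the strong-equistability witnesses — and then restrict the density argument to a small open ball around that point lying inside the positive orthant; since the finitely many bad affine hyperplanes are each nowhere dense, their union cannot cover this open ball, so a positive point avoiding all of them exists. The more constructive flavor promised in the introduction would presumably come from making this perturbation explicit: begin with a valid positive $\varphi_0 \in \mathcal{W}$, and add a small generic vector from the linear subspace $\mathcal{W}_0 = \{\psi : \psi(S_i) = 0 \text{ for all } i\}$ chosen to avoid each of the finitely many hyperplanes $\{\psi : \varphi_0(T) + \psi(T) = 1\}$, keeping the perturbation small enough to preserve positivity.

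Finally I would verify that the resulting $\varphi$ is genuinely an equistable weight function: by construction $\varphi(S) = 1$ for every $S \in \mathcal{S}(G)$, and $\varphi(T) \ne 1$ for every nonempty $T \notin \mathcal{S}(G)$, which is exactly the biconditional ``$S$ is a maximal stable set iff $\varphi(S) = 1$'' required by the definition. The subtlety to double-check is the empty set and the precise handling of the threshold: strong equistability is stated for all $\gamma \le 1$, and I only use $\gamma = 1$, so I should confirm that $\gamma = 1$ is admissible and that no degenerate $T$ (such as a singleton that happens to lie on every witness) escapes the argument — but since each such $T$ individually admits a separating witness by hypothesis, each contributes only a proper, nowhere-dense affine locus, and the union-over-finitely-many argument goes through.
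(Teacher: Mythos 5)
Your proof is correct, but it is not the route this paper takes --- in fact it is essentially the original ``dimension argument'' of Mahadev et al.\ that Section~\ref{sec:proof} is explicitly offering an alternative to. You work in the affine space $\mathcal{W}$ of weightings giving every maximal stable set weight $1$, observe that for each $T\in\mathcal{T}(G)$ the witness $\varphi_T$ (with $\gamma=1$) shows the locus $\varphi(T)=1$ is a proper affine subspace of $\mathcal{W}$, and conclude by a finite-union/genericity argument that a small ball around a strictly positive witness contains a point avoiding all these loci; your handling of positivity via that ball is sound given that the witnesses are strictly positive. The paper instead argues by an extremal/iterative scheme: among positive weightings with $\varphi(S)=1$ for all $S\in\mathcal{S}(G)$, take one minimizing the number $t(\varphi)$ of sets $T\in\mathcal{T}(G)$ with $\varphi(T)=1$; if some $T^*$ still has weight $1$, replace $\varphi$ by the convex combination $(1-\epsilon/n)\varphi+(\epsilon/n)\varphi_{T^*}$ for small $\epsilon$, which preserves positivity and the unit weight of maximal stable sets automatically, keeps every previously separated $T$ separated, and separates $T^*$, contradicting minimality. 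What each approach buys: yours is shorter and conceptually transparent but non-constructive (a ``generic point'' must be exhibited separately), whereas the paper's convex-combination scheme yields an explicit finite procedure for assembling an equistable weight function from the witnesses $\varphi_T$, which is the point emphasized after the proof.
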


\begin{proof}
Let $G$ be a strongly equistable graph on $n$ vertices, and for each
set $T\in {\cal T}(G)$, fix a function
$\varphi_T:V(G)\to \R_+$ such that
$\varphi_T(S) = 1{\rm\ for\ all\ } S\in {\cal S}(G)$, and $\varphi_T(T)\neq 1$.

Let $\varphi:V(G)\to \R_+$ be a function satisfying
$\varphi(S) = 1{\rm\ for\ all\ } S\in {\cal S}(G)$
and minimizing the number, denoted by $t(\varphi)$, of sets $T\in {\cal T}(G)$ with
$\varphi(T)= 1$.
We claim that $t(\varphi) = 0$, that is, that $\varphi$ is an equistable weight function of $G$.

Suppose this is not the case, and let $T^*\in {\cal T}(G)$ be a set such that $\varphi(T^*)= 1$.
Note that $\varphi_{T^*}(T^*)\le |T^*|\le n$.
Fix any $\epsilon\in (0,1)$ such that
if $\varphi(T)\neq 1$ for some $T\in {\cal T}(G)$, then
$|\varphi(T)-1|>\epsilon$ and
$\varphi_{T^*}(T)>\epsilon$.
(Note that such an $\epsilon$ exists.)
Define a function $\varphi':V(G)\to \R_+$ by the rule
$\varphi'(x) = (1-\epsilon/n)\varphi(x) + (\epsilon/n)\varphi_{T^*}(x)$ for all $x\in V(G)$.

If $S\in {\cal S}(G)$, then
$$\varphi'(S) = (1-\epsilon/n) \varphi(S) + (\epsilon/n) \varphi_{T^*}(S) = 1\,.$$
If $T\in {\cal T}(G)$ such that $\varphi(T) <1$,
then  $\varphi(T)<1-\epsilon$ and therefore
$$\varphi'(T) = (1-\epsilon/n) \varphi(T) + (\epsilon/n) \varphi_{T^*}(T)
<(1-\epsilon/n)(1-\epsilon) + \epsilon = 1-(\epsilon/n)(1-\epsilon)<1\,.$$
If $T\in {\cal T}(G)$ such that $\varphi(T) >1$,
then  $\varphi(T)>1+\epsilon$ and therefore
$$\varphi'(T) = (1-\epsilon/n) \varphi(T) + (\epsilon/n) \varphi_{T^*}(T)
>(1-\epsilon/n)((1+\epsilon)+\epsilon^2/n =1+\epsilon(1-1/n)\ge 1\,.$$
Moreover,
$$\varphi'(T^*) = (1-\epsilon) \varphi(T^*) + \epsilon \varphi_{T^*}(T^*) =
1-\epsilon+ \epsilon \varphi_{T^*}(T^*) \neq 1\,.$$
Hence, $\varphi'$ satisfies $\varphi(S) = 1{\rm\ for\ all\ } S\in {\cal S}(G)$ and
$\varphi(T)\neq 1$ for all $T\in {\cal T}(G)$ such that $\varphi(T)\neq 1$ or $T = T^*$.
This implies
$t(\varphi')<t(\varphi)$, contrary to the definition of $\varphi$.
\end{proof}

From the above proof, an equistable weight function of a given strongly equistable
graph $G$ can be easily derived, assuming that for each
set $T\in {\cal T}(G)$ we have a function $\varphi_T:V(G)\to \R_+$ such that
$\varphi_T(S) = 1{\rm\ for\ all\ } S\in {\cal S}(G)$, and $\varphi_T(T)\neq 1$.

\section{Open problems}

Since bipartite graphs are triangle-free and Orlin's conjecture fails for complements of line graphs of triangle-free graphs,
the following question and its generalization seem natural.

\begin{problem}
Does Orlin's conjecture hold within the class of complements of line graphs of bipartite graphs?
\end{problem}

%The following more general question is also open.

\begin{problem}
Does Orlin's conjecture hold within the class of perfect graphs?
\end{problem}

Note that the weaker conjecture by Mahadev et al.~(stating that every equistable graph is strongly equistable) holds for the class of perfect graphs, as shown in~\cite{MR1268776}.

Finally, let us remark that, to the best of our knowledge, the other
conjecture posed by Mahadev et al.~in~\cite{MR1268776},
stating that the strongly equistable graphs are closed under substitution, is still open.

\subsection*{Acknowledgements}

We would like to thank St\'ephan Thomass\'e for helpful discussions, in particular for suggesting the proof of Theorem~\ref{thm:eq-seq}
presented in Section~\ref{sec:proof}.

\bibliography{equistable-bib}{}
\bibliographystyle{plain}

\end{document}